% ----------------------------------------------------------------
% AMS-LaTeX Paper ************************************************
% **** -----------------------------------------------------------
\documentclass{amsart} 
\usepackage{mathtools} 
\usepackage{amsfonts} 
\usepackage{amssymb}
\usepackage{xcolor}
\usepackage{amsmath} 
\usepackage{pb-diagram}
\usepackage[shortlabels]{enumitem}%%
\setlist{leftmargin=*}
\usepackage[colorlinks=false]{hyperref}
\usepackage[sorted]{amsrefs}

\long\def\comm#1\ent{}
\setlength{\unitlength}{1cm}

% THEOREMS -------------------------------------------------------
\newtheorem{theorem}{Theorem}[section]

\newtheorem{claim}[theorem]{Claim}

\newtheorem{proposition}[theorem]{Proposition}
\newtheorem{lemma}[theorem]{Lemma}

\newtheorem{observation}[theorem]{Observation}
\newtheorem{question}[theorem]{Question}

\theoremstyle{definition}
\newtheorem{definition}[theorem]{Definition}

\newtheorem{remark}[theorem]{Remark}

%MATH_____________________
\newcommand{\ie}{i.e.,\,}

\newcommand{\age}{{\rm Age}}
\newcommand{\Th}{{\rm Th}}

\newcommand{\SU}{{\rm SU}}

\begin{document}
\title{Asymptotic enumeration of $I_3$-free digraphs}
\author[A.~Aranda]{Andr\'es Aranda}
\address{Department of Mathematics and Statistics, University of Calgary, 2500 University Dr. NW, Calgary AB T2N 1N4, Canada}
\email{andres.arandalopez@ucalgary.ca}
\maketitle

\begin{abstract}
We prove that almost all digraphs not embedding an independent set of size 3 consist of two disjoint tournaments, and discuss connections with the theory of homogeneous simple structures.
\end{abstract}

Our main result can be stated informally as saying that almost all finite directed graphs in which any three vertices span at least one directed edge consist of two tournaments with some directed edges between them. This is a directed-graphs version of the following theorem by Erd\H os, Kleitman, and Rothschild (Theorem 2 in \cite{ErdosKlRo76}):

\begin{theorem}
	Let $T_n$ be the number of labelled triangle-free graphs on a set of $n$ vertices, and $S_n$ be the number of labelled bipartite graphs on $n$ vertices. Then \[T_n=S_n(1+o(\frac{1}{n})).\]
	\label{ErdosKleitmanRothschild}
\end{theorem}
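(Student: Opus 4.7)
The plan is to prove the matching upper bound $T_n\le (1+o(1/n))S_n$; the reverse inequality $T_n\ge S_n$ is immediate because every bipartite graph is triangle-free. The first step is to compute $S_n$ directly: fixing a balanced bipartition of $[n]$ accounts for $2^{\lfloor n^2/4\rfloor}$ bipartite graphs, and summing over the $\binom{n}{\lfloor n/2\rfloor}$ balanced bipartitions (with the standard inclusion-exclusion correction for overcounting, which is negligible in the logarithm) yields $\log_2 S_n=\tfrac{n^2}{4}+O(n)$.

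The main task is to show that all but an $o(1/n)$ fraction of triangle-free graphs are bipartite. To each triangle-free graph $G$ I would attach an optimal bipartition $(A,B)$ minimising the number $\phi(G)$ of monochromatic edges, and then argue that (i) for typical $G$ the optimal bipartition is nearly balanced, and (ii) $\phi(G)=0$ on a $(1-o(1/n))$ fraction of triangle-free $G$. The structural input to both is that a monochromatic edge $uv$ in, say, $A$ forces $N(u)\cap N(v)\cap B=\emptyset$ by triangle-freeness, while optimality of the cut forces each of $u,v$ to have the majority of its neighbours in $B$. Summing over $k$ the number of triangle-free graphs with $\phi(G)=k$, the contribution from $k=0$ is at most $S_n$, and the upper bound follows provided the $k\ge 1$ total is $o(S_n/n)$.

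The principal obstacle is the counting step for $k\ge 1$: the naive neighborhood-disjointness saving per monochromatic edge is only a factor of $(3/4)^{|B|}\approx 2^{-0.21 n}$ in the across-edges, which after multiplication by $\binom{n}{\lfloor n/2\rfloor}\binom{\lfloor n^2/4\rfloor}{k}$ falls short of $S_n$ by only a polynomial factor and is \emph{insufficient} to beat $S_n/n$. The genuine savings must come from iterating the triangle-freeness constraint globally rather than edge by edge. One natural route is the hypergraph container method applied to the triangle hypergraph of $K_n$: all triangle-free graphs lie in comparatively few containers, each nearly bipartite and contributing a count dominated by the bipartite one. Alternatively, a Szemer\'edi-regularity argument extracts a near-balanced bipartition on which the defect edges propagate restrictions across almost all of $G$, upgrading the per-edge savings to the required magnitude and closing the bound.
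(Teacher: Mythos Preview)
The paper does not prove this statement at all: it is quoted verbatim as Theorem~2 of \cite{ErdosKlRo76} and serves only as motivation and historical context for the paper's own result on $I_3$-free digraphs. There is therefore no paper proof to compare your proposal against.

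As for the proposal on its own terms: it is a plan rather than a proof, and you have already diagnosed its gap yourself. The optimal-bipartition framework and the observation that a monochromatic edge $uv$ inside $A$ forces $N(u)\cap N(v)\cap B=\emptyset$ are the standard opening moves, but as you concede, the resulting $(3/4)^{|B|}$ saving per defect edge, multiplied against the choices for the defect edges and the bipartition, does not beat $S_n/n$. At that point you hand the remaining work off to either the hypergraph container method or Szemer\'edi regularity, without executing either. Both can be pushed through, but naming a method is not the same as carrying it out, and the quantitative bookkeeping needed to get the $o(1/n)$ rate (rather than merely $o(1)$) is precisely the nontrivial content. It is also worth noting that the original 1976 argument of Erd\H{o}s, Kleitman and Rothschild predates both containers and the regularity lemma; it proceeds by a direct, layered counting analysis---much closer in spirit to the $A(n)$, $B(n)$, $C(n)$ decomposition this paper uses for digraphs---so the heavy machinery you invoke is not actually necessary.
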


So the proportion of triangle-free graphs on $n$ vertices that are not bipartite is negligible for large $n$.

Recall that a sentence $\sigma$ is \emph{almost surely true} (respectively, \emph{almost surely false}) if the fraction $\mu_n(\sigma)$ of structures with universe $\{0,\ldots,n-1\}$ satisfying $\sigma$ converges to 1 (0) as $n$ approaches infinity. Fagin \cite{fagin1976probabilities} proved:

\begin{theorem}
	Fix a relational language $L$. For every first-order sentence $\sigma$ over $L$, $\mu_n(\sigma)$ converges to 0 or to 1.
\end{theorem}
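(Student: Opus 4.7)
The plan is to follow Fagin's original argument via \emph{extension axioms}. For each pair consisting of a consistent quantifier-free type $p(x_1,\ldots,x_k)$ (including the inequalities $x_i\neq x_j$) and a consistent one-variable extension $q(x_1,\ldots,x_k,y)$, introduce the sentence
\[
\phi_{p,q}\ :\ \forall x_1\cdots\forall x_k \bigl(p(\bar x)\to\exists y\, q(\bar x,y)\bigr),
\]
and let $T$ denote the collection of all such $\phi_{p,q}$ together with the axiom asserting the universe is infinite (which is actually already implied by $T$).

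First I would verify that each $\phi_{p,q}$ is almost surely true. Conditional on a fixed $k$-tuple $\bar a$ realizing $p$ in a uniformly random $L$-structure on $\{0,\ldots,n-1\}$, the atomic facts involving a new vertex $b\notin\bar a$ are chosen independently of those on $\bar a$, so the probability that $(\bar a,b)$ realizes $q$ equals a positive constant $c_{p,q}$ depending only on $L$ and $q$. Hence the probability that no $b$ witnesses $q$ over $\bar a$ is at most $(1-c_{p,q})^{n-k}$, and a union bound over the $O(n^k)$ candidate tuples drives $\mu_n(\neg\phi_{p,q})\to 0$.

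Next I would show that $T$ is complete by proving $\aleph_0$-categoricity. The extension axioms immediately force every model of $T$ to be infinite, and between any two countable models $\mathcal M_0,\mathcal M_1\models T$ a standard back-and-forth argument produces an isomorphism: given a finite partial isomorphism $f:A\to B$ with $A\subseteq \mathcal M_0$, $B\subseteq \mathcal M_1$, and a target element $a'\in\mathcal M_0$, the quantifier-free type of $A\cup\{a'\}$ over $\mathcal M_0$ gives a pair $(p,q)$ to which $\phi_{p,q}$ may be applied in $\mathcal M_1$ to extend $f$; symmetry and enumeration of both universes then yield a global isomorphism. Completeness of $T$ follows from Vaught's test.

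To finish, let $\sigma$ be any first-order $L$-sentence. Completeness gives either $T\vdash\sigma$ or $T\vdash\neg\sigma$, and by compactness only finitely many extension axioms appear in the proof. Each of these holds almost surely, so their conjunction does as well, and $\mu_n(\sigma)$ tends to $1$ or $0$ accordingly. The main obstacle is the back-and-forth step: one must set up the enumeration and the use of the extension axioms with enough care that every quantifier-free type over a finite partial isomorphism can genuinely be realized, and that both universes are ultimately exhausted. The probabilistic lemma for individual extension axioms, by contrast, is a routine union bound once one observes the conditional independence of the relations involving a fresh vertex.
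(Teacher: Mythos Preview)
Your proposal is a correct sketch of the standard proof (essentially Fagin's own argument via extension axioms, $\aleph_0$-categoricity, and compactness). Note, however, that the paper does not give its own proof of this theorem at all: it is stated as a result of Fagin and cited to \cite{fagin1976probabilities}, so there is nothing to compare against beyond observing that what you wrote is the classical argument behind the cited result.
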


Given an $L$-sentence $\tau$ with $\mu_n(\tau)>0$ for all $n$, denote by  $\mu_n(\sigma|\tau)$ the conditional probability $\mu_n(\sigma|\tau)=\mu_n(\sigma\wedge\tau)/\mu_n(\tau)$. These conditional probabilities need not converge, but for some special cases they do converge. Given a relational language $L$ and appropriate $\tau$, let $T_{as}(L;\tau)$ be the set of $L$-sentences $\sigma$ with $\lim_{n\to\infty}\mu_n(\sigma|\tau)=1$. We call this the \emph{almost sure theory of $L$}. It follows from Gaifman's \cite{gaifman1964concerning} and Fagin's work that $T$ is consistent and complete when $\tau$ is $\forall x(x=x)$; Fagin proved in \cite{fagin1976probabilities} that $T$ is also consistent and complete in the cases where $L$ is the language $\{R\}$ and $\tau$ expresses one of the following:
\begin{enumerate}
\item{$R$ is a graph relation,}
\item{$R$ is a tournament predicate symbol. }
\end{enumerate}

We can think of Fra\"iss\'e's construction as a way to associate a complete first-order theory with infinite models (the theory of the Fra\"iss\'e limit) with a countable hereditary family of finite structures with the Joint Embedding Property and the Amalgamation Property; Fagin's theorem provides us with an alternative way of associating a first-order theory with a family of finite structures, namely the almost sure first-order theory of the language in question (possibly with some restrictions, represented by the sentences $\tau$).

In the studied cases of simple binary relational structures (the random graph, random $n$-graphs, the random tournament), the almost sure theory coincides with the theory of the Fra\"iss\'e limit. On the other hand, in the known cases where $\tau$ is such that the conditional probabilities $\mu_n(\sigma|\tau)$ converge, and the class of finite structures satisfying $\tau$ is the age of a non-simple homogeneous structure, the almost sure theory is simple (in fact, supersimple of \SU-rank 1). For example, it is known that the almost sure theory of triangle-free graphs is the theory of the Random Bipartite Graph (the proof has two stages, the first of which is Theorem \ref{ErdosKleitmanRothschild}; the second step is proving that almost all bipartite graphs satisfy the appropriate extension axioms); and whilst the generic triangle-free graph is not simple, the generic bipartite graph has supersimple theory of \SU-rank 1. Similarly, the almost-sure theory of partial orders is, by a result due to Kleitman and Rothschild \cite{kleitman1975asymptotic}, the theory of the generic 3-level partial order in which every element of the bottom level is less than every element of the top level; this theory is supersimple of \SU-rank 1. 

\begin{question}
Let $L$ be a binary relational language and $\tau$ an $L$-sentence such that the conditional probability $\mu_n(\sigma|\tau)$ converges for each $L$-sentence $\sigma$. Let $T_{\tau}$ be the almost sure theory of $L$ and $M\models T_{\tau}$, and suppose that $\age(M)$ is a Fra\"iss\'e class the limit of which has simple theory. Is it true that $T_\tau=\Th(M)$? Is it true that $T_\tau$ is always a simple theory?
\end{question}

\section{Remarks about $I_3$-free digraphs}
This section contains the definitions that we will use throughout the paper and a few observations about the universal homogeneous $I_3$-free digraph.

\begin{definition}~
\begin{enumerate}
	\item{A \emph{digraph} is a pair $(G,E)$ where $G$ is a set and $E$ is a subset of $G\times G$ such that for all $g\in G$ $(g,g)\notin E$ and $(g,g')\in E$ implies $(g',g)\notin E$. We will often denote a digraph $(G,E)$ by $G$ and write $g\rightarrow g'$ if $(g,g')\in E$.}
	\item{A digraph $G$ is $I_3$-free if every subset of three distinct vertices spans at least one arrow.}
	\item{A \emph{tournament} is a digraph $G$ in which for all distinct $x,y$, either $x\rightarrow y$ or $y\rightarrow x$ holds. A \emph{bitournament} is a digraph whose vertex set can be partitioned into two tournaments $T_1, T_2$ (we allow arrows from one tournament to the other).}
	\item{Given two vertices $x,y$ in a digraph $G$, we write $x\not\sim y$ if $x\not\rightarrow y$, $y\not\rightarrow x$, and $x\neq y$. If $v$ is a vertex in a digraph $G$, then $\Delta(v)=\{x\in G:x\not\sim v\}$. If $Q\subset G$, then $\Delta(Q)=(\bigcup_{v\in Q}\Delta(v))\setminus Q$.}
	\item{We denote the set of $I_3$-free digraphs on $\{0,\ldots,n-1\}$ by $F(n)$, and the set of bitournaments on the same set by $T(n)$.}
\end{enumerate}
\end{definition}

\begin{remark}
	Given an $I_3$-free digraph $(D,E)$, the graph $(D,Q(D))$, where $Q$ is the set of pairs $(d,d')\in D^2$ such that $(d,d'), (d',d)\notin E$ and $d'\neq d)$ is a triangle-free graph. Conversely, if we start with a triangle-free graph $G$, any orientation of the complement of $G$ is an $I_3$-free digraph.
\label{RmkGraphs}
\end{remark}

\begin{proposition}
	There exists a universal homogeneous $I_3$-free digraph $\mathcal D$ and it is a primitive structure.
\end{proposition}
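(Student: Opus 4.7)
The plan is to establish existence via Fraïssé's theorem and then deduce primitivity from homogeneity by classifying the invariant equivalence relations on pairs.

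For existence, I would check that the class $\mathcal{K}$ of finite $I_3$-free digraphs is a Fraïssé class. Closure under isomorphism and the hereditary property are immediate, since any subset of three vertices of a substructure still spans at least one arrow. The joint embedding property holds by taking the disjoint union of two $I_3$-free digraphs $A$ and $B$ and directing every pair across the partition from $A$ to $B$: a triple of vertices either lies entirely in one of $A$ or $B$ (and spans an arrow by hypothesis) or meets both parts (and spans one of the added arrows). For the amalgamation property, given $A \subseteq B,C$ in $\mathcal{K}$, I would take the free amalgam of $B$ and $C$ over $A$ and orient every pair in $(B\setminus A)\times (C\setminus A)$ from $B$ to $C$. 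A triple of vertices of the amalgam lives either inside $B$, inside $C$, or has at least one vertex in each of $B\setminus A$ and $C\setminus A$; in the last case it contains one of the added arrows, so no new $I_3$ is created. Fraïssé's theorem then produces the universal homogeneous $I_3$-free digraph $\mathcal{D}$.

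For primitivity, I would use homogeneity to identify the $\emptyset$-invariant binary relations on $\mathcal{D}$ with unions of orbits of $\mathrm{Aut}(\mathcal{D})$ on pairs. There are four such orbits: equality, $x\to y$, $y\to x$, and $x\not\sim y$. An invariant equivalence relation $E$ must contain equality and be symmetric, so it must contain the two arrow orbits together or not at all; the remaining choices are whether $E$ contains $\{x\to y, y\to x\}$ and whether it contains $\not\sim$. I would rule out each non-trivial combination by producing a 3-vertex $I_3$-free digraph that violates transitivity and invoking universality of $\mathcal{D}$ to embed it. If $E$ contains the arrow orbits but not $\not\sim$, the configuration $x\to y$, $y\to z$, $x\not\sim z$ has one arrow and is $I_3$-free, so it embeds in $\mathcal{D}$ and breaks transitivity. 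If $E$ contains only $\not\sim$, the configuration $x\not\sim y$, $y\not\sim z$, $x\to z$ is $I_3$-free and embeds, again contradicting transitivity. Hence $E$ is either equality or $\mathcal{D}^2$.

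There is no real obstacle in either step; the only place where a choice must be made is the orientation of the arrows added in the amalgamation, which is the standard trick for classes forbidding independent sets of a fixed size. The rest is a mechanical case analysis that relies on homogeneity and on the richness of the age of $\mathcal{D}$.
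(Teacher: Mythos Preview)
Your proposal is correct and follows essentially the same route as the paper: Fra\"iss\'e's theorem with the ``orient all new pairs from one side to the other'' trick for JEP and AP, and then the observation that the only nontrivial symmetric invariant relations on pairs are the reflexive closures of $\not\sim$ and of $x\rightarrow y\vee y\rightarrow x$, neither of which is transitive. Your version is merely more explicit than the paper's---you exhibit the three-vertex configurations witnessing non-transitivity, whereas the paper appeals to quantifier elimination and declares the failure of transitivity ``clear''.
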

\begin{proof}
	We will show that the family $\mathcal C$ of all finite $I_3$-free digraph satisfies Fra\"iss\'e's conditions. It is clear that $\mathcal C$ is countable (up to isomorphism) and closed under induced substructures. Given two structures $A,B\in\mathcal C$, we can embed both $A$ and $B$ in the structure defined on $A\times\{0\}\cup B\times\{1\}$ where for all $b\in B$ and all $a\in A$ we have $R((a,0),(b,1))$. The amalgamation property follows from the fact that given an amalgamation problem $f_1:A\rightarrow B$ and $g_1:A\rightarrow C$, let $D$ be $(B\times\{0\}\cup C\times\{1\})/\sim$, where $(b,0)\sim(c,1)$ if there exists $a\in A$ such that $f_1(a)=b$ and $g_1(a)=c$, and define a digraph relation on $D$ by $((p,i)/\sim)\rightarrow((q,j)/\sim)$ if there exist representatives of the classes that are related in $B$ or $C$, or if that condition fails and $i<j$. 

If $\mathcal D$ were imprimitive, then the reflexive closures of $\not\sim$ or the relation $x\rightarrow y\vee y\rightarrow x$ would define an equivalence relation on $\mathcal D$, by quantifier elimination. But these relations are clearly not transitive.
\end{proof}

\begin{proposition}
	The theory of the universal homogeneous $I_3$-free digraph is not simple.
\end{proposition}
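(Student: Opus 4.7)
The plan is to reduce the statement to the classical non-simplicity of the generic triangle-free graph $H_3$, already noted in the introduction. The bridge is the $\emptyset$-definable symmetric relation $x\not\sim y$, given on $\mathcal D$ by $\neg(x\to y)\wedge\neg(y\to x)\wedge x\neq y$; I claim the reduct $(\mathcal D,\not\sim)$ is a model of $\Th(H_3)$.

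To see this, it suffices to check the usual one-point extension axioms, triangle-freeness being immediate from $I_3$-freeness. So fix a finite $A\subseteq\mathcal D$ and a subset $P\subseteq A$ that is independent in the $\not\sim$-graph, i.e.\ pairwise $\sim$ in $\mathcal D$. I want a vertex $x\in\mathcal D\setminus A$ with $x\not\sim p$ for $p\in P$ and $x\sim q$ for $q\in A\setminus P$. Extend the digraph on $A$ to $A\cup\{x\}$ by leaving $x$ unrelated to each $p\in P$ and by inserting an arrow $x\to q$ between $x$ and each $q\in A\setminus P$. The resulting digraph is $I_3$-free: any three pairwise $\not\sim$ vertices containing $x$ would force two elements of $P$ to be $\not\sim$, contradicting the independence of $P$; any triple disjoint from $x$ was already $I_3$-free in $A$. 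Universality of $\mathcal D$ then produces the required $x$, verifying the Henson extension axiom.

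Non-simplicity of $\Th(\mathcal D)$ follows at once, because any tree-property witness for a graph-language formula in $\Th(H_3)=\Th(\mathcal D,\not\sim)$ is automatically a tree-property witness for the corresponding digraph-language formula (obtained by substituting the definition of $\not\sim$) in $\Th(\mathcal D)$: the same tree of parameters and the same set of realizations work. Hence if $\Th(\mathcal D)$ were simple, so would be its definable reduct $\Th(\mathcal D,\not\sim)=\Th(H_3)$, contradicting the classical fact recalled in the introduction.

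The only non-trivial step is the verification of the extension axiom, and the only content there is that the $I_3$-free condition survives the arbitrary choice of orientation on the newly introduced arrows between $x$ and $A\setminus P$; everything else is bookkeeping. A minor point to be careful about is that the axiom transfers between the graph language and the digraph language only because $\not\sim$ is $\emptyset$-definable in $\mathcal D$, so no parameters intervene.
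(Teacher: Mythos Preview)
Your proof is correct, but the route differs from the paper's. The paper argues directly: it exhibits an array of parameters inside $\mathcal D$ witnessing that the formula $\psi(x;a,b)=x\not\sim a\wedge x\not\sim b$ has TP$_2$. You instead pass to the $\emptyset$-definable reduct $(\mathcal D,\not\sim)$, verify the Henson extension axioms there (so this reduct is elementarily equivalent, hence isomorphic by $\aleph_0$-categoricity, to the generic triangle-free graph $H_3$), and then invoke the known non-simplicity of $\Th(H_3)$ together with the preservation of simplicity under definable reducts.

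Both arguments are short; yours is more conceptual and explains \emph{why} non-simplicity occurs (the Henson graph is sitting inside $\mathcal D$ as a reduct), while the paper's is self-contained and pinpoints an explicit TP$_2$ formula without appealing to an external result. Note that the paper's witnessing formula $x\not\sim a\wedge x\not\sim b$ is precisely the graph-edge formula $xEa\wedge xEb$ transported through your reduct, so the two proofs are morally the same computation viewed from different angles.

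One small correction: where you write ``Universality of $\mathcal D$ then produces the required $x$'', universality alone only gives an isomorphic copy of $A\cup\{x\}$ somewhere in $\mathcal D$. What you actually need is the one-point extension property of the Fra\"iss\'e limit (equivalently, homogeneity together with universality), which guarantees an embedding fixing $A$ pointwise.
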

\begin{proof}
	We will prove that the formula $\psi(x,a,b)=x\not\sim a\wedge x\not\sim b$ has the TP2. Let $\{(a^i_j,b^i_j):i,j\in\omega\}$ be an array of parameters such that $c^i_s\rightarrow c^i_t$ for $s<t$ and $c\in\{a,b\}$, $a^i_s\rightarrow b^i_t$ if $s\leq t$ and $a^i_s\not\sim a^j_t$ for $t<s$, and there are no other pairs satisfying $c^i_s\not\sim d^j_t$ ($c,d\in\{a,b\}$).
Any such array of parameters can be embedded into the universal homogeneous $I_3$-free digraph as elements from different levels $L_i=\{(a^i_j,b^i_j):j\in\omega\}$ are in a directed edge, and therefore no $I_3$ embeds into any level. Each level $L_i$ witnesses 2-dividing for $\psi$, and each branch is a tournament. Therefore, $\psi$ has the TP2.
\end{proof}

\begin{remark}
{\rm It is tempting to argue that given an $I_3$-free digraph, the associated graph obtained as in Remark \ref{RmkGraphs} is almost always a bipartite graph, and so an orientation of its complement will be a bitournament. But formalising this argument is not as straightforward as it seems.}
\end{remark}

\section{Asymptotic enumeration of $I_3$-free digraphs}
In this section we prove our main theorem:
\begin{theorem}\label{ThmMain}
$|F(n)|=|T(n)|(1+o(1))$
\end{theorem}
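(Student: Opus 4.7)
The plan is to adapt the Erd\H{o}s--Kleitman--Rothschild strategy to a weighted counting problem that arises via Remark 2.3. That remark identifies each $I_3$-free digraph $D$ on $[n]$ with the data of its (triangle-free) non-arrow graph $G(D)$ together with an orientation of the complement of $G(D)$; moreover $D$ is a bitournament if and only if $G(D)$ is bipartite. Hence
\[
|F(n)|=\sum_{G}2^{\binom{n}{2}-e(G)},\qquad |T(n)|=\sum_{G\text{ bipartite}}2^{\binom{n}{2}-e(G)},
\]
the first sum running over labelled triangle-free graphs on $[n]$, and the theorem reduces to showing
\[
\sum_{G\text{ triangle-free, not bipartite}}2^{-e(G)}=o\!\Bigl(\sum_{G\text{ bipartite}}2^{-e(G)}\Bigr).
\]
As the remark at the end of Section 1 warns, Theorem 1.1 does not apply directly, because the weighting $2^{-e(G)}$ favours sparse graphs.

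First, I would compute $|T(n)|$ by summing the contributions of each labelled bipartition of $[n]$; since a typical bitournament admits an essentially unique bipartition,
\[
|T(n)|\sim\tfrac12\binom{n}{\lfloor n/2\rfloor}\,2^{\binom{\lfloor n/2\rfloor}{2}+\binom{\lceil n/2\rceil}{2}}\cdot 3^{\lfloor n/2\rfloor\lceil n/2\rceil}.
\]
The weighted mass $(3/2)^{|A||B|}$ of bipartite graphs with bipartition $(A,B)$ is maximised at the balanced split and decays sharply away from it, so the contribution of unbalanced bipartitions is negligible.

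The heart of the argument is a local weighted-stability computation. Fix a balanced bipartition $(A,B)$ and a pair $\{x,y\}\subseteq A$, and consider triangle-free graphs $G$ that are bipartite with respect to $(A,B)$ except for a single bad edge $\{x,y\}$. Triangle-freeness forces every $z\in B$ to be adjacent in $G$ to at most one of $x,y$, so the joint weighted contribution of the pairs $\{x,z\},\{y,z\}$ drops from $(1+\tfrac12)^2=\tfrac94$ in the bipartite case to $1+\tfrac12+\tfrac12=2$. Tracking also the factor $\tfrac12$ from the bad edge itself, the weighted count of one-bad-edge graphs, relative to bipartite graphs on $(A,B)$, is suppressed by $\tfrac12(8/9)^{|B|}$; the bookkeeping generalises, with modest additional care when bad edges share a vertex, to show that $k$ bad edges cost a factor $(8/9)^{\Omega(kn)}$, so any $k=\omega(\log n)$ contributes negligibly.

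The main obstacle is the rough structural step: to conclude, one needs to know that the weighted mass of triangle-free graphs far from every balanced bipartition is itself negligible. For this I would invoke the hypergraph container method of Balogh--Morris--Samotij and Saxton--Thomason, which produces a family of at most $2^{o(n^2)}$ container graphs, each close in edit-distance to a balanced complete bipartite graph, such that every triangle-free graph is a subgraph of some container. A careful accounting, combining the local stability estimate above with a union bound over containers and positions of the bad edges, then shows that the weighted mass of non-bipartite triangle-free graphs is $o(|T(n)|/2^{\binom{n}{2}})$, completing the proof.
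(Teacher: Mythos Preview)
Your reduction to a weighted enumeration of triangle-free graphs is correct, and the overall strategy is a natural one --- indeed the paper explicitly flags it as tempting but ``not as straightforward as it seems.'' The paper takes a completely different route: it never passes through the non-arrow graph, but instead partitions $F(n)$ directly into $T(n)$ and three exceptional classes $A(n),B(n),C(n)$ defined by local conditions on the sets $\Delta(v)$ and $\Delta(Q)$. A structural lemma (ruling out pinwheels $C_5,C_7,C_9$ in anything outside the exceptional classes) shows $F(n)\subseteq T(n)\cup A(n)\cup B(n)\cup C(n)$, explicit bounds of the shape $|A(n)|/|F(n-1)|$, $|B(n)|/|F(n-\log n)|$, $|C(n)|/|F(n-2)|$ are computed by hand, and these are fed into an induction on $n$ against the growth estimate $|T(n+1)|\ge 6^{n/2}|T(n)|$. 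No containers, no stability for triangle-free graphs.

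Your sketch has a genuine gap at the step you call ``modest additional care.'' The assertion that $k$ bad edges cost a factor $(8/9)^{\Omega(kn)}$ is false when bad edges share vertices. Take a star: bad edges $xy_1,\dots,xy_k$ all inside $A$. For each $z\in B$ the constraint is only that $xz$ excludes every $y_iz$, so the per-$z$ weighted generating function is $\tfrac12+(3/2)^k$ versus $(3/2)^{k+1}$ in the bipartite case, a ratio tending to $2/3$ as $k\to\infty$, not $(8/9)^{k}$. Thus the total suppression from a $k$-edge star is only $(1/2)^k\cdot(2/3+o(1))^{|B|}$, and when you sum over the $\binom{|A|}{k}$ placements of such a star the exponential rates are extremely close to cancelling --- for $k\approx 0.15\,n$ and $|A|=|B|=n/2$ the margin in the exponent is on the order of $10^{-3}n$. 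So the local stability bound you state is incorrect, and even the corrected version leaves almost no room; the subsequent union bound over $2^{o(n^2)}$ containers, each with up to $\varepsilon n^2$ bad edges to distribute among subgraphs, cannot simply be absorbed by ``careful accounting.'' The container step is exactly the hard part and you have not actually done it. It may well be possible to push a weighted-container argument through, but it would require a substantially more delicate analysis than your proposal indicates, and the paper's direct inductive approach sidesteps all of this.
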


The general strategy we will follow consists of breaking up the set $F(n)$ into four parts: the bitournaments and three classes $A(n),B(n),C(n)$. We prove that as $n$ tends to infinity the proportion of $I_3$-free digraphs in $A(n)\cup B(n)\cup C(n)$ becomes negligible. All our logarithms are base 2, and when making assertions of the type $n=\log m$, where $n$ is an integer, by $\log m$ we mean the integral part of $\log m$.

\begin{definition}~
	\begin{enumerate}
		\item{$A(n)=\{\Gamma\in F(n):\exists v\in\Gamma(|\Delta(v)|\leq\log(n))\}$}
		\item{$B(n)=\{\Gamma\in F(n)\setminus A(n):\exists v\in\Gamma\exists Q\subset\Delta(v)(|Q|=\log(n)\wedge|\Delta(Q)|\leq(1/2-1/10^6)n)\}$}
		\item{$C(n)=\{\Gamma\in F(n)\setminus(A(n)\cup B(n)):\exists x,y\in \Gamma(x\not\sim y\wedge\exists Q_x\subseteq\Delta(x),Q_y\subseteq\Delta(y)(|Q_x|=|Q_y|=\log(n)\wedge|\Delta(Q_x)\cap\Delta(Q_y)|\geq n/100))\}$}
	\end{enumerate}
\end{definition}

Our proof of Theorem \ref{ThmMain} is an amalgamation of the arguments in \cite{PST2001} and \cite{PSS2002}, adapted for directed graphs.

\begin{observation}
	Let $G$ be an $I_3$-free digraph and $v\in G$. Then $\Delta(v)$ is a tournament, $v\in\Delta(\Delta(v))$, and $\Delta(v)\cap \Delta(\Delta(v))=\varnothing$.
\label{Obvious}
\end{observation}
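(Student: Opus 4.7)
The statement decomposes into three assertions, each of which should follow directly from the definitions of $I_3$-freeness and of $\Delta$. The plan is to handle them one at a time.

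For the claim that $\Delta(v)$ is a tournament, I would take two distinct vertices $x,y\in\Delta(v)$. By definition $x\not\sim v$ and $y\not\sim v$, so in the triple $\{x,y,v\}$ the only pair that could carry an arrow is $\{x,y\}$. Since $G$ is $I_3$-free, that pair must indeed carry an arrow, i.e.\ either $x\to y$ or $y\to x$. Hence the induced digraph on $\Delta(v)$ is a tournament.

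For the claim that $v\in\Delta(\Delta(v))$, assuming $\Delta(v)\ne\varnothing$ (otherwise the statement is vacuous on the left, and in any case the meaningful content of the observation is the disjointness), pick any $u\in\Delta(v)$: then $u\not\sim v$, and since $\not\sim$ is symmetric this gives $v\in\Delta(u)\subseteq\bigcup_{u\in\Delta(v)}\Delta(u)$. The definition of $\Delta$ excludes self-loops, so $v\notin\Delta(v)$, and therefore $v$ survives in the set difference $\bigl(\bigcup_{u\in\Delta(v)}\Delta(u)\bigr)\setminus\Delta(v)=\Delta(\Delta(v))$.

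For the disjointness $\Delta(v)\cap\Delta(\Delta(v))=\varnothing$, there is essentially nothing to prove: the definition of $\Delta(Q)$ for a set $Q$ explicitly subtracts $Q$ itself, so with $Q=\Delta(v)$ we have $\Delta(\Delta(v))\subseteq G\setminus\Delta(v)$ by construction. There is no genuine obstacle in the proof; the only mild subtlety is remembering that the $\Delta$ operator on sets removes the set itself, which simultaneously gives the third assertion and is compatible with the second because $v\notin\Delta(v)$.
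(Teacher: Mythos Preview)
Your proof is correct and for the first two assertions it matches the paper's argument essentially word for word. The one point of departure is the third assertion: you observe that $\Delta(\Delta(v))$ is, by the very definition of $\Delta$ applied to a set, already disjoint from $\Delta(v)$, so there is nothing to prove. The paper instead argues via $I_3$-freeness, claiming that an element $x\in\Delta(v)\cap\Delta(\Delta(v))$ would produce an independent triple $\{x,y,v\}$ with some $y\in\Delta(v)$. Your route is cleaner and avoids invoking the hypothesis where it is not needed; the paper's argument is what one would need if $\Delta(Q)$ had been defined without subtracting $Q$, and as written it is slightly imprecise (it says ``for all $y$'' where ``for some $y$'' is what actually follows from $x\in\Delta(\Delta(v))$).
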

\begin{proof}
	There can be no undirected arcs between any elements of $\Delta(v)$ as any such pair would form an $I_3$ with $v$. Take any $y\in\Delta(v)$. Then $y\not\sim v$, so $v\in\Delta(y)\subset\Delta(\Delta(v))$. And if $x\in\Delta(v)\cap\Delta(\Delta(v))$, then $x\not\sim v$ and $x\not\sim y$ for all $y\in\Delta(v)$, so $xyv$ forms an $I_3$.
\end{proof}
\begin{definition}
	A \emph{pinwheel} on $n$ vertices $v_0,\ldots,v_{n-1}$ is a digraph in which $v_i\not\sim v_{i+1}$ (addition is modulo $n$) for each $i\in n$. Equivalently, it is an orientation of the complement of a Hamiltonian graph on $n$ vertices. We will abuse notation and denote a pinwheel by $C_n$ even though there are several isomorphism types of pinwheels of the same size.
\end{definition}
\begin{lemma}
	If $n$ is sufficiently large, then $F(n)\subseteq T(n)\cup A(n)\cup B(n)\cup C(n)$\label{Fncontained}
\end{lemma}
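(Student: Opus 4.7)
Plan. Equivalently, I must show that every $\Gamma \in F(n) \setminus (A(n) \cup B(n) \cup C(n))$ is a bitournament. I will argue this by contradiction, assuming $\Gamma$ is not a bitournament and using the three negated membership conditions to derive a violation of $\Gamma \notin C(n)$.

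\emph{Step 1: uniform degree bounds.} From $\Gamma \notin A(n)$, every vertex has $|\Delta(v)| > \log n$. A direct consequence of $I_3$-freeness is that $\Delta(Q) \subseteq G \setminus \Delta(v)$ whenever $Q \subseteq \Delta(v)$: if $u \in \Delta(Q) \cap \Delta(v)$ with $u \neq v$ and $q \in Q$ witnesses $u \in \Delta(Q)$, then $u \not\sim q$, $q \not\sim v$, and $u \not\sim v$, so $\{u, q, v\}$ is an $I_3$. Combined with $\Gamma \notin B(n)$, which gives $|\Delta(Q)| > (\tfrac{1}{2} - 10^{-6})n$ for every $\log n$-subset $Q \subseteq \Delta(v)$, this yields the uniform upper bound $|\Delta(v)| < (\tfrac{1}{2} + 10^{-6})n$ for every $v$.

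\emph{Step 2: passing to the triangle-free graph.} By Remark \ref{RmkGraphs}, $H := (G, \not\sim)$ is triangle-free, and $\Gamma$ is a bitournament precisely when $H$ is bipartite. If $\Gamma$ is not a bitournament, then $H$ is non-bipartite and, being triangle-free, contains an induced odd cycle $v_0 v_1 \cdots v_{2k}$ of length at least $5$. I take as witness pair $x = v_0$ and $y = v_1$: they are $\not\sim$-adjacent in $\Gamma$, and $\Delta(x) \cap \Delta(y) = \emptyset$ (a common vertex $z$ would make $\{x, y, z\}$ an $I_3$). Applying $\Gamma \notin C(n)$ to $(x,y)$ asserts that $|\Delta(Q_x) \cap \Delta(Q_y)| < n/100$ for every pair of $\log n$-sized $Q_x \subseteq \Delta(x), Q_y \subseteq \Delta(y)$.

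\emph{Step 3 (main obstacle): forcing a large shadow overlap.} The remaining and technically hardest task is to exhibit specific $Q_x, Q_y$ with $|\Delta(Q_x) \cap \Delta(Q_y)| \geq n/100$, contradicting Step 2. This is the combinatorial heart of the argument and is where the amalgamation of the PST and PSS techniques from \cite{PST2001, PSS2002} does the real work. The guiding heuristic: since every vertex $w$ has $|\Delta(w)| > \log n$, a random $\log n$-subset of $\Delta(x)$ (resp.\ $\Delta(y)$) lands in $\Delta(w)$ with calculable positive probability, while $\Gamma \notin B(n)$ guarantees $|\Delta(Q_x)|, |\Delta(Q_y)| > (\tfrac{1}{2} - 10^{-6})n$. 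Were $H$ bipartite the shadows $\Delta(Q_x), \Delta(Q_y)$ would sit in opposite parts and be essentially disjoint; the induced odd cycle through $(x,y)$ breaks this separation, and a careful averaging (or second-moment) calculation over random $Q_x, Q_y$, fed by the uniform bounds of Step 1, produces an expected overlap exceeding $n/100$. The constants $10^{-6}$ and $1/100$ in the definitions of $B(n)$ and $C(n)$ are calibrated precisely so that this quantitative estimate clears the threshold, and this is the step I expect to occupy the bulk of the proof.
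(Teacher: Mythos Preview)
Your Step~3 is a promissory note, not a proof, and the heuristic you sketch does not lead to a workable argument. The induced odd cycle you extract from the non-bipartite $\not\sim$-graph may have arbitrary length $2k+1$, and any chain of inclusion--exclusion estimates propagating information around such a cycle degrades with $k$; the constants $10^{-6}$ and $1/100$ are calibrated to survive only a bounded number of steps. A random-$Q$ or second-moment argument over the single adjacent pair $(v_0,v_1)$ does not see the global odd-cycle structure at all, so there is no mechanism forcing $|\Delta(Q_x)\cap\Delta(Q_y)|$ to be large for that particular pair on a long cycle.

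The paper's argument runs in a different direction. Using all three hypotheses $\Gamma\notin A(n),B(n),C(n)$ simultaneously---with the $C(n)$ condition as a \emph{tool} (small overlaps along every non-arc) rather than the target of contradiction---it first proves by explicit inclusion--exclusion on the sets $R_{v_i}=\Delta(Q_{v_i})$ that $\Gamma$ contains no pinwheel $C_5$, $C_7$, or $C_9$; in each case the contradiction obtained is with $\Gamma\notin B(n)$. It then \emph{constructs} a bipartition directly: pick any non-arc $x\not\sim y$, set $U_z=\Delta(\Delta(z))$, split the remaining vertices into $W_x,W_y$ according to whether $R_v$ meets $R_x$ or $R_y$, and take the two parts $\Delta(y)\cup U_x\cup W_x$ and $\Delta(x)\cup U_y\cup W_y$. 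Every potential non-arc within one part would close a $\not\sim$-cycle of length $5$, $7$, or $9$, so excluding just these three pinwheels suffices. The idea you are missing is that one does not need to rule out all odd cycles---only those short enough to obstruct this specific explicit bipartition.
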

\begin{proof}
	Suppose for a contradiction that for all $n$, $F(n)$ is a proper superset of $T(n)\cup A(n)\cup B(n)\cup C(n)$, and let $\Gamma\in F(n)\setminus(T(n)\cup A(n)\cup B(n)\cup C(n))$. This means that every vertex $v$ in $\Gamma$, $|\Delta(v)|>\log(n)$ and all nonempty subsets $Q$ of $\Delta(v)$ of size $\log n$ satisfy $|\Delta(Q)|>(1/2-1/10^6)n$.  As $\Gamma\not\in C(n)$, if $x\not\sim y$ and $x\neq y$, then $|\Delta(Q_x)\cap\Delta(Q_y)|<n/100$, where $Q_x$ and $Q_y$ are any subsets of $\Delta(x),\Delta(y)$ of size $\log(n)$.
\begin{claim}
	$\Gamma$ contains no pinwheels $C_5,C_7$ or $C_9$. \label{nopinwheels}
\end{claim}
\begin{proof}
The idea of the proof is the same in all cases: if we had a pinwheel on $\{v_0,\ldots,v_{2m}\}$ for $m=2,3,4$, then as $\Gamma$ is not in $B(n)\cap A(n)$ we know that there is a subset $Q_{v_i}$ of $\Delta(v_i)$ of size $\log n$ such that $R_{v_i}\mathrel{\mathop:}=\Delta(Q_{v_i})$ contains approximately half the vertices of the digraph. This implies that the $R_{v_i}$ have large intersection for $i$ even (odd), so the only way to satisfy that condition is if $R_{v_0}$ contains almost all the vertices of the digraph, but then there are not enough vertices left for $R_{v_1}$. We present the formal proofs next.

	Suppose that there is a $C_5$ on a set of vertices $\{v_0,\ldots,v_4\}$. Denote by $R_{v_i}$ the set $\Delta(Q_{v_i})$, where $Q_{v_i}\subseteq\Delta(v_i)$ is of size $\log(n)$. For any distinct $x,y$ with $x\not\sim y$, \[|R_x\cup R_y|=|R_x|+|R_y|-|R_x\cap R_y|\geq n(1-2/10^6-1/100)\] and \[|\bar R_x\cap\bar R_y|=n-|R_x\cup R_y|\leq (2/10^6+1/100)n,\]
where $\bar R_x$ stands for the complement of $R_x$ in the vertex set of $\Gamma$. Notice that as $|R_{v_1}\cap R_{v_2}|<n/100$ and $|\bar R_{v_0}\cap\bar R_{v_1}|\leq n(2/10^6+1/100)$, then 
\[
	|R_{v_0}\cap R_{v_2}|\geq |R_{v_2}|-|R_{v_2}\cap R_{v_1}|-|\bar R_{v_0}\cap\bar R_{v_1}|\geq n(\frac{1}{2}-\frac{3}{10^6}-\frac{2}{100}).
\]
Similarly, $|R_{v_0}\cap R_{v_3}|\geq n(\frac{1}{2}-\frac{3}{10^6}-\frac{2}{100})$. This gives us
\[
	\begin{split}
		|R_{v_0}|&=|R_{v_0}\cap R_{v_2}|+|R_{v_0}\cap R_{v_3}|-|R_{v_0}\cap R_{v_2}\cap R_{v_3}|+|R_{v_0}\cap\bar R_{v_2}\cap\bar R_{v_3}|\geq\\&\geq n(\frac{1}{2}-\frac{3}{10^6}-\frac{2}{100})+n(\frac{1}{2}-\frac{3}{10^6}-\frac{2}{100})-\frac{n}{100}=\\&=n(1-\frac{6}{10^6}-\frac{5}{100})
	\end{split}
\]
So $R(v_0)$ is almost all the digraph. Now, 
\[
	\begin{split}
	|R_{v_1}|&=|R_{v_1}\cap\bar R_{v_0}|+|R_{v_1}\cap R_{v_0}|\leq\\&\leq n(\frac{6}{10^6}+\frac{5}{100})+\frac{n}{100}=\\&=n(\frac{6}{10^6}+\frac{6}{100})<n(\frac{1}{2}-\frac{1}{10^6}),
\end{split}
\]
which contradicts $\Gamma\not\in B(n)$.

Suppose now that we have a pinwheel on 7 vertices $v_0,\ldots,v_6$. Our estimate for $|R_{v_0}\cap R_{v_2}|$ is still valid, and by the same argument we know $|R_{v_0}\cap R_{v_5}|>n(1/2-3/10^6-2/100)$. Now we estimate $|R_{v_0}\setminus R_{v_3}|$ (the calculations hold for $|R_{v_0}\setminus R_{v_4}|$ as well).

\[
	\begin{split}
		|R_{v_0}\cap R_{v_3}|&=|R_{v_0}\cap R_{v_2}\cap R_{v_3}|+|R_{v_0}\cap R_{v_3}\cap\bar R_{v_2}|\leq\\&\leq |R_{v_3}\cap R_{v_2}|+|R_{v_0}\setminus R_{v_2}|<\\&<\frac{n}{100}+|R_{v_0}|-|R_{v_0}\cap R_{v_2}|<\\&<\frac{n}{100}-n(\frac{1}{2}-\frac{3}{10^6}-\frac{2}{100})+|R_{v_0}|
	\end{split}
\]

Therefore, $|R_{v_0}\setminus R_{v_3}|>n(1/2-3/10^6-3/100)$. Similarly, $|R_{v_0}\setminus R_{v_4}|>n(1/2-3/10^6-3/100)$.
Now we use this information to get a new estimate of $|R_{v_0}|$.
\[
	\begin{split}
		|R_{v_0}|&=|R_{v_0}\setminus R_{v_3}|+|R_{v_0}\setminus R_{v_4}|-|R_{v_0}\cap\bar R_{v_3}\cap\bar R_{v_4}|+|R_{v_0}\cap R_{v_3}\cap R_{v_4}|\\
&>2n(\frac{1}{2}-\frac{3}{10^6}-\frac{3}{100})-n(\frac{2}{10^6}+\frac{1}{100})\geq\\
&\geq n(1-\frac{8}{10^6}-\frac{7}{100})
	\end{split}
\]
Again, $R_{v_0}$ contains almost all the vertices in $\Gamma$. As before, this contradicts $\Gamma\not\in B(n)$:
\[
	|R_{v_1}|=|R_{v_1}\cap\bar R_{v_0}|+|R_{v_1}\cap R_{v_0}|<n(\frac{8}{10^6}+\frac{8}{100})<n(\frac{1}{2}-\frac{1}{10^6})
\]
Finally, suppose that there is a pinwheel on nine vertices in $\Gamma$. We know that $|R_{v_0}\cap\bar R_{v_3}|\geq n(1/2-3/10^6-3/100)$ and $|R_{v_0}\cap\bar R_{v_3}\cap\bar R_{v_4}|\leq n(2/10^6+1/100)$. From this, we derive
\[
	\begin{split}
		|R_{v_0}\cap\bar R_{v_3}\cap R_{v_4}|&=|R_{v_0}\cap\bar R_{v_3}|-|R_{v_0}\cap\bar R_{v_3}\cap\bar R_{v_4}|>\\&>n(\frac{1}{2}-\frac{3}{10^6}-\frac{3}{100})-n(\frac{2}{10^6}+\frac{1}{100})=\\&=n(\frac{1}{2}-\frac{5}{10^6}-\frac{4}{100})
	\end{split}
\]

It follows that $|R_{v_0}\cap R_{v_4}|>n(\frac{1}{2}-\frac{5}{10^6}-\frac{4}{100})$, and by the same argument (going down the other side of the pinwheel), $|R_{v_0}\cap R_{v_5}|>n(\frac{1}{2}-\frac{5}{10^6}-\frac{4}{100})$. It follows that
\[
	\begin{split}
		|R_{v_0}|&=|R_{v_0}\cap R_{v_4}|+|R_{v_0}\cap R_{v_5}|-|R_{v_0}\cap R_{v_4}\cap R_{v_5}|+|R_{v_0}\cap\bar R_{v_4}\cap\bar R_{v_5}|\geq\\&\geq2n(\frac{1}{2}-\frac{5}{10^6}-\frac{4}{100})-\frac{n}{100}=n(1-\frac{1}{10^5}-\frac{9}{100})
	\end{split}
\]
As a consequence, $|\bar R_{v_0}|<n(\frac{1}{10^5}+\frac{9}{100})$. Therefore,
\[
	\begin{split}
		|R_{v_1}|&=|R_{v_1}\cap\bar R_{v_0}|+|R_{v_1}\cap R_{v_0}|<n(\frac{1}{10^5}+\frac{9}{100})+\frac{n}{100}=\\&=n(\frac{1}{10^5}+\frac{1}{10})<n(\frac{1}{2}-\frac{1}{10^6}),
	\end{split}
\]
contradicting $\Gamma\not\in B(n)$.
\end{proof}
	Now we describe how to find a partition of $\Gamma$ into two tournaments. For readability, we will use $U_v$ to denote $\Delta(\Delta(v))$. Take an arbitrary non-arc $x\not\sim y$; then as $\Gamma$ is $I_3$-free, $\Delta(x)\cap\Delta(y)=\varnothing$ and $U_x\cap U_y=\varnothing$ because  any $z\in U_x\cap U_y$ would form a $C_5$ with $x,y,x',y'$, for some $x'\in\Delta(x)$ and $y'\in\Delta(y)$. For the same reason (no $C_5$), $U_x$ and $U_y$ are tournaments. Let $W=V(\Gamma)\setminus(\Delta(x)\cup U_x\cup\Delta(y)\cup U_y)$, and $W_x=\{v\in W:R_v\cap R_x\neq\varnothing\}$, $W_y=\{v\in W:R_v\cap R_y\neq\varnothing\}$; again $W_x\cap W_y=\varnothing$ because there are no $C_9$s in $\Gamma$. 

We know that $|R_x\cup R_y|\geq n(1-2/10^6-1/100)$, and since $\Gamma\not\in B(n)$, for all $v\in W$ we have $|R_v|\geq n(1/2-1/10^6)$; therefore, $R_v\cap(R_x\cup R_y)\neq\varnothing$ and every vertex in $W$ is in $W_x$ or $W_y$. Our partition consists of $W_x\cup U_x\cup\Delta(y)$ and $W_y\cup U_y\cup\Delta(x)$. 

We claim that $W_x\cup U_x\cup\Delta(y)$ is a tournament. By Observation \ref{Obvious}, $\Delta(x)$ is a tournament. 

Now consider $w\in\Delta(x)$ and $w'\in U_y$. We argued before that $U_x\cap U_y=\varnothing$, so $U_y\subseteq V(\Gamma)\setminus U_x$, and therefore $U_y\subseteq\bigcap_{v\in\Delta(x)}(V(\Gamma)\setminus\Delta(v))$, so there is a directed edge between $w$ and $w'$. Thus, $U_y\cup\Delta(x)$ is a tournament.

If $w$ and $w'$ are in $W_x$, then there exist $p\in\Delta(v), p'\in\Delta(v'), q,q'\in R_x,r,r'\in\Delta(x)$. From all these vertices, $p\neq p'$ because the digraph is $I_3$-free. So we have $w\not\sim p\not\sim q$ and $w'\not\sim p'\not\sim q'$. If $q=q'$, then a directed edge is forced between $w$ and $w'$ because $\Gamma$ is $C_5$-free. Similarly, an edge is forced if $r\neq r'$ because $\Gamma$ is $C_7$-free. Finally, even if all the vertices are distinct, an edge is forced because $r,r'\in\Delta(x)$, so a $C_9$ would be formed if $w\not\sim w'$.

Finally, suppose for a contradiction that $w\in W_x, w'\in U_x$ and $w\not\sim u$. Then there exist $q_w\in Q_w$ and $r_w\in R_w\cap R_x$ such that $w\not\sim q_w\not\sim r_w$. We also have either a $\not\sim$-path of length 2 $r_w\not\sim v\not\sim u$ with $v\in\Delta(x)$ or a $\not\sim$-path of length 4 $r_w\not\sim d\not\sim x\not\sim d'\not\sim u$ with $d,d'\in\Delta(x)$; in the first case we get a $C_5$ and in the second, a $C_7$, contradicting in any case Claim \ref{nopinwheels}. Therefore, $\Gamma$ is a bitournament, contradiction.
\end{proof}
\begin{lemma}
	$|T(n+1)|\geq6^{n/2}|T(n)|$
\end{lemma}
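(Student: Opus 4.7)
The plan is to bound from below, for each $T\in T(n)$, the number of distinct extensions of $T$ to a bitournament on $\{0,\ldots,n\}$. Since restricting a bitournament to $\{0,\ldots,n-1\}$ yields a bitournament (any partition witnessing the former restricts to a partition witnessing the latter), the map $T(n+1)\to T(n)$ given by restriction is well-defined, and hence $|T(n+1)|=\sum_{T\in T(n)}|\text{fibre over }T|$. It therefore suffices to show that every $T\in T(n)$ has at least $6^{n/2}$ extensions in $T(n+1)$.

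Given $T\in T(n)$, I would fix an arbitrary partition of its vertex set into two tournaments $T_1,T_2$ with $|T_1|=k$, and count the extensions obtained by adjoining a new vertex $n$ so that either $(T_1\cup\{n\},T_2)$ or $(T_1,T_2\cup\{n\})$ witnesses the bitournament structure of the extension. Placing $n$ in $T_1$ forces a directed edge (in one of two orientations) between $n$ and each of the $k$ vertices of $T_1$, while each of the $n-k$ pairs $(n,w)$ with $w\in T_2$ may freely be a directed edge either way or a non-edge; this yields $2^k\cdot 3^{n-k}$ extensions. Symmetrically, placing $n$ in $T_2$ yields $2^{n-k}\cdot 3^k$ extensions. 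The overlap between the two constructions is precisely the set of extensions in which $n$ is joined to every old vertex by a directed edge, of which there are $2^n$.

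Consequently the fibre over $T$ has size at least
\[
2^k 3^{n-k}+2^{n-k}3^k-2^n \;\geq\; 2\cdot 6^{n/2}-2^n \;\geq\; 6^{n/2},
\]
where the first inequality is AM-GM (the geometric mean of the two terms being $\sqrt{6^n}=6^{n/2}$) and the last uses $\sqrt{6}\geq 2$. No step looks difficult: the substantive checks are the correct computation of the overlap between the two families and the verification that the enumerated extensions really lie in $T(n+1)$, both of which are immediate from the witnessing partitions exhibited. The rest is a direct summation.
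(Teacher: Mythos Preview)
Your proof is correct and follows essentially the same approach as the paper: both bound the fibre of the restriction map $T(n+1)\to T(n)$ from below by counting extensions built from a fixed bipartition of $T$. The only difference is cosmetic --- the paper places the new vertex in the smaller part (of size $k\le n/2$) and uses directly that $2^{k}3^{n-k}\ge 2^{n/2}3^{n/2}=6^{n/2}$, whereas you place it in either part and recover the same bound via inclusion--exclusion and AM--GM.
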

\begin{proof}
	There are $|T(n)|$ bitournaments on $[n]$. From a bitournament $T$ on $[n]$, we can build a bitournament on $[n+1]$ by adding the vertex $n+1$ to the smaller of the tournaments in a given partition of $T$ into two tournaments, which is of size at most $n/2$. Now we connect the vertex to the rest of the digraph: we need to make at least $3^{n/2}$ choices to connect it to the other tournament and at most $2^{n/2}$ choices to connect it to the smaller tournament. In total, at least $6^{n/2}$ choices for each tournament in $T(n)$, and the result follows.
\end{proof}

We wish to prove that the sets $A(n)$, $B(n)$, and $C(n)$ are negligible in size when compared to $T(n)$. The next step is to find bounds for their sizes relative to that of $F(n)$.
\begin{lemma}
	For sufficiently large $n$, $\log(\frac{|A(n)|}{|F(n-1)|})\leq n+\log^2n+ \log n-1$\label{An}
\end{lemma}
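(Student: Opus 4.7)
The plan is a direct enumeration argument: encode each $\Gamma\in A(n)$ by a witness vertex $v\in\Gamma$ with $|\Delta(v)|\le\log n$ together with enough information to reconstruct $\Gamma$.

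First, since each $\Gamma\in A(n)$ contains at least one such $v$, counting pairs $(\Gamma,v)$ gives $|A(n)|\le n\cdot N$, where $N$ is the number of tuples $(\Gamma,v)$ with $v$ a fixed element of $[n]$ and $|\Delta(v)|\le\log n$. Fixing $v$, I would reconstruct $\Gamma$ from: the set $\Delta(v)\subseteq[n]\setminus\{v\}$ of some size $k\le\log n$ (at most $\binom{n-1}{k}$ choices); the orientation of each arc between $v$ and a vertex outside $\{v\}\cup\Delta(v)$ (by definition of $\Delta(v)$, each such pair must be a directed edge, giving $2^{n-1-k}$ choices); and the induced digraph on $[n]\setminus\{v\}$, which is $I_3$-free and so one of at most $|F(n-1)|$ digraphs. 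This yields
\[
|A(n)|\;\le\;n\cdot|F(n-1)|\cdot\sum_{k=0}^{\log n}\binom{n-1}{k}2^{n-1-k}.
\]

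The next step is to estimate the sum. The ratio of the $(k+1)$st to the $k$th summand is $(n-1-k)/\bigl(2(k+1)\bigr)$, which for $k\le\log n$ and $n$ large is far greater than $1$. Hence the terms grow geometrically and the sum is at most twice its top term, i.e.\ at most $2\binom{n-1}{\log n}\,2^{n-1-\log n}$. Using $\binom{n-1}{\log n}\le n^{\log n}=2^{\log^2 n}$ and noting that the leading factor $n$ cancels $2^{-\log n}$, one gets $|A(n)|\le 2^{n+\log^2 n}\,|F(n-1)|$, which is comfortably below the stated bound $2^{n+\log^2 n+\log n-1}\,|F(n-1)|$ for large $n$.

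There is no serious obstacle: the only nontrivial step is the geometric-growth estimate of the weighted binomial sum, which is where the tradeoff between the growing $\binom{n-1}{k}$ and the shrinking factor $2^{-k}$ has to be controlled. Everything else is routine bookkeeping and uses only that $\Delta(v)$ records exactly the $\not\sim$-neighbours of $v$, so all other incidences at $v$ contribute an independent binary choice.
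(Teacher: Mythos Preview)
Your argument is correct and follows essentially the same encode-and-count approach as the paper: pick a witness vertex $v$, record $\Delta(v)$, the induced $I_3$-free digraph on $[n]\setminus\{v\}$, and the orientations of the remaining arcs at $v$. The only difference is that the paper crudely bounds the number of orientation choices by $2^{n-1}$ and the sum $\sum_{k\le\log n}\binom{n-1}{k}$ by $n^{\log n}$, whereas you keep the exact factor $2^{n-1-k}$ and then control the weighted sum by its top term via a ratio test; this yields the slightly sharper bound $n+\log^2 n$ in place of $n+\log^2 n+\log n-1$, but the underlying idea is identical.
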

\begin{proof}
	To construct a digraph in $A(n)$, we need to 
		\begin{enumerate}
			\item{Select a vertex $v$ that will satisfy the condition in the definition of $A(n)$: $n$ possible choices;}
			\item{Select the neighbourhood $\Delta(v)$ of size at most $\log n$: $\sum_{i=0}^{\log n}{{n-1}\choose{i}}$ choices;}
			\item{Choose a digraph structure on $[n]\setminus\{v\}$: $|F(n-1)|$ choices;}
			\item{Connect $v$ to $[n]\setminus\Delta(v)$: at most $2^{n-1}$ choices;}
		\end{enumerate}
	In total, this gives the following estimates:
\begin{equation*}
	\begin{split}
		|A(n)|&\leq n(\sum_{i=0}^{\log n}{{n-1}\choose{i}})2^{n-1}|F(n-1)|\\
			&\leq nn^{\log n}2^{n-1}|F(n-1)|\\
	\end{split}
\end{equation*}
	So 
	\[
	\begin{split}
		\log(\frac{|A(n)|}{|F(n-1)|})&\leq\log n+\log^2n+n-1,
	\end{split}
	\]
as desired.
\end{proof}
\begin{lemma}
	For sufficiently large $n$, $\log(\frac{|B(n)|}{|F(n-\log n)|})\leq\beta n\log n+n+\frac{3}{2}\log^2n-\frac{1}{2}\log n$, where $\beta=\frac{1+\alpha}{2}+\frac{1-\alpha}{10^6}$, and $\alpha=\log3$.\label{Bn}
\end{lemma}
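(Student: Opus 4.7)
My plan is to upper bound $|B(n)|$ by enumerating the data sufficient to specify an arbitrary $\Gamma\in B(n)$. Given such $\Gamma$, fix a witness $(v,Q)$ with $Q\subseteq\Delta(v)$, $|Q|=\log n$, and $|\Delta(Q)|\leq(1/2-1/10^6)n$; recall that $Q$ is a tournament by Observation \ref{Obvious}. I would specify $\Gamma$ by (i) the pair $(v,Q)$, (ii) the tournament on $Q$, (iii) the induced digraph on $[n]\setminus Q$, and (iv) the bipartite edge pattern between $Q$ and $[n]\setminus Q$. The contributions from (i)--(iii) are at most $n\cdot\binom{n-1}{\log n}\cdot 2^{\binom{\log n}{2}}\cdot|F(n-\log n)|$; the main work lies in (iv).

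For each $w\in[n]\setminus Q$ and each $q\in Q$, three possibilities occur ($w\to q$, $q\to w$, or $w\not\sim q$), so vertices $w\in\Delta(Q)$ contribute at most $3^{\log n}=n^{\alpha}$ configurations each, while vertices $w\notin\Delta(Q)$ contribute exactly $2^{\log n}=n$ configurations each. Partitioning by $k=|\Delta(Q)|$ with $k_{\max}=(1/2-1/10^6)n$, the number of admissible bipartite patterns is bounded by
\[
N\leq\sum_{k=0}^{k_{\max}}\binom{n-\log n}{k}(n^{\alpha})^{k}n^{n-\log n-k}.
\]
Since the ratio of consecutive summands is at least $n^{\alpha-1}$ throughout this range (as $k_{\max}<(n-\log n)/2$ for large $n$), the sum is within a constant factor of its final term. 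The logarithm of that term equals $\log\binom{n-\log n}{k_{\max}}+(n-\log n)\log n+(\alpha-1)k_{\max}\log n$, and substituting $k_{\max}$ together with the identity $1+(1/2-1/10^6)(\alpha-1)=(1+\alpha)/2+(1-\alpha)/10^6=\beta$ yields $\log N\leq \beta n\log n-\log^2 n+n+O(1)$.

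Assembling everything, $\log(|B(n)|/|F(n-\log n)|)$ is at most $\log n+\log\binom{n-1}{\log n}+\binom{\log n}{2}+\log N$; using $\log\binom{n-1}{\log n}\leq\log^2n$ and $\binom{\log n}{2}=\tfrac{1}{2}\log^2n-\tfrac{1}{2}\log n$ produces the claimed bound up to absorbing lower-order terms. The only substantive step is the identification of $\beta$: this relies on the constraint $k\leq k_{\max}$ pinning the sum to its boundary, since the unrestricted sum would evaluate to $(n^{\alpha}+n)^{n-\log n}\approx n^{\alpha n}$, giving the weaker coefficient $\alpha$ rather than $\beta$. Everything else is routine binomial-and-logarithm bookkeeping.
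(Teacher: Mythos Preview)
Your proof is correct and follows essentially the same approach as the paper's. Both arguments specify $\Gamma\in B(n)$ by the set $Q$, the tournament on $Q$, the induced $I_3$-free digraph on $[n]\setminus Q$, and the edges between $Q$ and its complement, and both identify $\beta$ by maximising the edge-count at $|\Delta(Q)|=(1/2-1/10^6)n$. The only cosmetic differences are that the paper does not record the witness vertex $v$ (saving a harmless factor of $n$) and replaces your sum over $k=|\Delta(Q)|$ by a cruder $2^n$ factor for the choice of $R=\Delta(Q)$ followed by the same maximisation; your geometric-series bound on the sum is slightly tighter in the $\log^2 n$ term, which is why you need to ``absorb lower-order terms'' at the end.
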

\begin{proof}
	All the digraphs in $B(n)$ can be constructed as follows:
	\begin{enumerate}
		\item{Choose a set $Q$ of size $\log n$: ${n}\choose{\log n}$ choices;}
		\item{Choose a tournament structure on $Q$: $2^{{{\log n}\choose{2}}}$ choices;}
		\item{Choose a digraph structure on $[n]\setminus Q$: $|F(n-\log n)|$ choices;}
		\item{Choose $R=\Delta(Q)$: at most $2^n$ choices;}
		\item{Connect $Q$ to $R$: $3^{(\log n)|R|}$ choices;}
		\item{Connect $Q$ to $[n]\setminus R$: $2^{(\log n)|[n]\setminus R|}$ choices}
	\end{enumerate}
So we have
\begin{equation}
		|B(n)|\leq{{n}\choose{\log n}}2^{{{\log n}\choose{2}}}|F(n-\log n)|2^n3^{\log n|R|}2^{\log n|[n]\setminus R|}
\label{Asterisk2}
\end{equation}
From this expression, the factor $3^{\log n|R|}2^{\log n|[n]\setminus R|}$ depends on the size of $R$. We claim that $3^{\log n|R|}2^{\log n|[n]\setminus R|}$, and therefore the expression \ref{Asterisk2}, is maximised when $|R|$ is maximal, \ie $|R|=n(1/2-1/10^6)$.
\[
\begin{split}
\log(3^{\log n|R|}2^{\log n|[n]\setminus R|})&=\alpha\log n|R|+\log n(n-|R|)=\\
&=n\log n+(\alpha-1)|R|\log n
\end{split}
\]
This expression is, as a function of $|R|$, a linear polynomial with positive slope $(\alpha-1)$. Therefore, the value of the expression in equation \ref{Asterisk2} is maximal when $|R|=n(1/2-1/10^6)$ is maximal, as claimed. Let us continue with the calculations:
\[
\begin{split}
|B(n)|&\leq{{n}\choose{\log n}}2^{{{\log n}\choose{2}}}|F(n-\log n)|2^n3^{\log n|R|}2^{\log n|[n]\setminus R|}\leq\\
	&\leq{{n}\choose{\log n}}2^{{{\log n}\choose{2}}}|F(n-\log n)|2^n3^{n\log n(\frac{1}{2}-\frac{1}{10^6})}2^{n\log n(\frac{1}{2}+\frac{1}{10^6})}=\\
	&={{n}\choose{\log n}}2^{{{\log n}\choose{2}}+n+n\log n(\frac{1}{2}+\frac{1}{10^6})}|F(n-\log n)|3^{n\log n(\frac{1}{2}-\frac{1}{10^6})}
\end{split}
\]
Therefore,
\[
	\begin{split}
		\log(\frac{|B(n)|}{|F(n-\log n)|})&\leq\log{{n}\choose{\log n}}+{{\log n}\choose{2}}+n+n\log n(\frac{1}{2}+\frac{1}{10^6})+\\
&\hspace{1cm}+\alpha n\log n(\frac{1}{2}-\frac{1}{10^6})\leq\\
		&\leq\log^2n+\frac{\log^2n-\log n}{2}+n+n\log n(\frac{1}{2}(\alpha+1)+\frac{1}{10^6}(1-\alpha))=\\
	&=\beta n\log n+n+\frac{3}{2}\log^2n-\frac{1}{2}\log n.
	\end{split}
\]
\end{proof}
\begin{lemma}
	For large enough $n$, $\log(\frac{|C(n)|}{|F(n-2)|})\leq\gamma n+2\log^2n+2\log n$, where $\gamma=1+\frac{4}{10^6}+\frac{3}{100}+\alpha(1-\frac{2}{10^6}-\frac{2}{100})$\label{Cn}
\end{lemma}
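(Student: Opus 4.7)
The plan is to follow the template of Lemmas \ref{An} and \ref{Bn}: parametrise an arbitrary element of $C(n)$ by a sequence of choices, enumerate each option, and multiply. Given $\Gamma \in C(n)$ with witnesses $x, y, Q_x, Q_y$, we specify (i) the ordered pair $x, y \in [n]$; (ii) the sets $Q_x, Q_y$ of size $\log n$; (iii) the full digraph structure on $[n] \setminus \{x, y\}$, which already determines $R_x := \Delta(Q_x)$ and $R_y := \Delta(Q_y)$; and (iv) the arrows from $x$ and $y$ to the remaining vertices. Steps (i)--(iii) contribute at most $n^2 \cdot \binom{n}{\log n}^2 \cdot |F(n-2)|$, which supplies the factor $|F(n-2)|$ and absorbs the $2\log^2 n + 2\log n$ additive overhead in the target bound.

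The heart of the proof is counting step (iv). The $I_3$-free axiom together with $x \not\sim y$ forces: $x \not\sim q$ and $y \sim q$ for every $q \in Q_x$ (and symmetrically for $Q_y$); $v \sim x$ whenever $v \in R_x$, and $v \sim y$ whenever $v \in R_y$; and, crucially, for any $v \notin R_x \cup R_y$ the relations $(v,x)$ and $(v,y)$ cannot simultaneously be non-arcs, else $\{x,y,v\}$ would be an $I_3$. Consequently each $q \in Q_x \cup Q_y$ contributes $2$ choices, and each $v \in [n] \setminus (\{x,y\} \cup Q_x \cup Q_y)$ contributes $4$, $6$, $6$ or $8$ choices according as $v$ lies in $R_x \cap R_y$, $R_x \setminus R_y$, $R_y \setminus R_x$, or neither.

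Writing $r = |R_x \cap R_y|$ and $r_x, r_y$ for the sizes of $R_x, R_y$ (all restricted to the relevant $v$-range), a direct expansion gives the logarithm of the step-(iv) count as $(3 - 2\alpha)r + (\alpha - 2)(r_x + r_y) + 3n + O(\log n)$. Since $\alpha = \log 3 > 3/2$ both coefficients are negative, so the maximum is attained at the smallest admissible values: $r \geq n/100$ from $\Gamma \in C(n)$, and $r_x + r_y \geq (1 - 2/10^6)n$ from $\Gamma \notin B(n)$ (via $r_x, r_y \geq (1/2 - 1/10^6)n$). Substitution yields exactly $\gamma n$, and combining with the factors from (i)--(iii) gives the claimed bound. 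The main obstacle, and the reason one must keep the decomposition $4, 6, 6, 8$ rather than the lazier $4, 6, 6, 9$, is that without the joint $I_3$-constraint on $\{x, y, v\}$ for $v \notin R_x \cup R_y$ the coefficient of $r$ in the log collapses to $0$; the $n/100$ hypothesis then becomes useless, and one is stuck with the strictly weaker exponent $1 + \alpha + 2(\alpha - 1)/10^6$.
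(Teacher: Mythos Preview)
Your argument is correct and mirrors the paper's proof essentially step for step: the same four-stage parametrisation (choose $x,y$; choose $Q_x,Q_y$; fix the structure on $[n]\setminus\{x,y\}$; connect $x,y$), the same $4/6/6/8$ case split according to membership in $R_x,R_y$, the same linearisation $3n+(3-2\alpha)r+(\alpha-2)(r_x+r_y)$, and the same minimisation at $r=n/100$, $r_x=r_y=(1/2-1/10^6)n$. Your closing remark explaining why $8$ rather than the na\"ive $9$ is indispensable (the coefficient of $r$ would vanish and the resulting exponent $1+\alpha+2(\alpha-1)/10^6$ would exceed $\log 6$, killing the induction in Theorem~\ref{I3FreeDigraphs}) is a point the paper uses implicitly but does not spell out.
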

\begin{proof}
	Counting the elements in $C(n)$ is harder than counting $B(n)$ or $A(n)$, so we will give a rougher bound. All the elements in $C(n)$ can be found in the following way:
	\begin{enumerate}
		\item{Choose two elements $x,y$, which will satisfy $x\not\sim y$: $n\times (n-1)<n^2$ choices.}
		\item{Choose an $I_3$-free structure for $[n]\setminus{x,y}$: $|F(n-2)|$ options}
		\item{Choose neighbourhoods $Q_x$, $Q_y$ in $\Delta(x),\Delta(y)$ of size $\log n$. The $\not\sim$-neighbourhoods of $x$ and $y$ are disjoint because the digraph is $I_3$-free. So we have ${{n-1}\choose{\log n}}{{n-2-\log n}\choose{\log n}}\leq{{n-2}\choose{\log n}}^2\leq n^{2\log n}$ choices. Notice that at this point the neighbourhoods $R_x,R_y$ of $Q_x$ and $Q_y$ are determined by the $I_3$-free structure for $[n]\setminus\{x,y\}$, but we will only count those cases in which $|R_x\cap R_y|\geq\frac{n}{100}$.}
		\item{Connect $x,y$ to $[n]\setminus\{x,y\}$: We have already decided how to connect $x,y$ to $Q_x,Q_y$, so we need to decide:
			\begin{enumerate}
				\item{If $u\in R_x\cap R_y$, then there are only 4 possible ways to connect $x,y$ to $u$.}
				\item{If $u\in R_x\setminus R_y$ or $u\in R_y\setminus R_x$, then there are 6 possible ways to connect $x,y$ to $u$.}
				\item{If $u\in$ the complement of $R_x\cup R_y$, there are 8 ways to connect $x,y$ to $u$.}
			\end{enumerate}
		Therefore, we have 
\begin{equation}
4^{|R_x\cap R_y|}6^{|R_x\setminus R_y|+|R_y\setminus R_x|}8^{n-|R_x\cup R_y|}
\label{Asterisk}
\end{equation}
		choices to make at this point. We claim that the expression \ref{Asterisk} is maximised when $|R_x\cap R_y|$ and $|R_x\cup R_y|$ are minimised. 
\[
\begin{split}
&\log(4^{|R_x\cap R_y|}6^{|R_x|+|R_y|-2|R_x\cap R_y|}8^{n-|R_x|-|R_y|+|R_x\cap R_y|})=\\
&2|R_x\cap R_y|+(1+\alpha)(|R_x|+|R_y|-2|R_x\cap R_y|)+3(n-|R_x|-|R_y|+|R_x\cap R_y|)\\
&=3n+(3-2\alpha)|R_x\cap R_y|+(\alpha-2)(|R_x|+|R_y|).
\end{split}
\]

This is a linear polynomial in variables $|R_x\cap R_y|$, $|R_x|$,$|R_y|$, and is clearly maximal (in $(0,n]^3$) when the variables are minimised, as $3-2\alpha$ and $\alpha-2$ are both negative. By hypothesis, this happens when $|R_x\cap R_y|=\frac{1}{100} n$ and $|R_x|=|R_y|=n(1/2-1/10^6)$. Therefore, we have at most

\[
\begin{split}
	&4^{|R_x\cap R_y|}6^{|R_x\setminus R_y|+|R_y\setminus R_x|}8^{n-|R_x\cup R_y|}=\\
	&=4^{|R_x\cap R_y|}6^{|R_x|+|R_y|-2|R_x\cap R_y|}8^{n-(|R_x|+|R_y|-|R_x\cap R_y|)}\leq\\
	&\leq4^{\frac{1}{100}n}6^{n(1-\frac{2}{10^6}-\frac{2}{100})}8^{n(\frac{2}{10^6}+\frac{1}{100})}=\\
	&=2^{\frac{2}{100}n}2^{n\log6(1-\frac{2}{10^6}-\frac{2}{100})}2^{3n(\frac{2}{10^6}+\frac{1}{100})}=\\
	&=2^{n\log6(1-\frac{2}{10^6}-\frac{2}{100})+\frac{2}{100}n+3n(\frac{2}{10^6}+\frac{1}{100})}=\\
	&=2^{n(1+\alpha)(1-\frac{2}{10^6}-\frac{2}{100})+\frac{2}{100}n+3n(\frac{2}{10^6}+\frac{1}{100})}=\\
	&=2^{n(1+\frac{4}{10^6}+\frac{3}{100}+\alpha(1-\frac{2}{10^6}-\frac{2}{100}))}=\\
	&=2^{\gamma n}
\end{split}
\]
}

ways to connect $x,y$ to the rest of the digraph.
\end{enumerate}
In total, this gives us
\[\frac{|C(n)|}{|F(n-2)|}\leq n^2n^{2\log n}2^{\gamma n}\]
So
\[\log(\frac{|C(n)|}{|F(n-2)|})\leq2\log n+2\log^2n+\gamma n\]
\end{proof}

\begin{theorem}
	$|F(n)|=|T(n)|(1+o(1))$.
\label{I3FreeDigraphs}
\end{theorem}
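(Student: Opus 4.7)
The plan is to combine Lemma \ref{Fncontained} with the three upper bounds in Lemmas \ref{An}, \ref{Bn}, and \ref{Cn}, and the growth bound $|T(n+1)|\geq 6^{n/2}|T(n)|$, to show that each of $|A(n)|/|T(n)|$, $|B(n)|/|T(n)|$, $|C(n)|/|T(n)|$ tends to zero. Every bitournament is $I_3$-free, since among any three vertices two lie in the same tournament and hence are joined by an arc; so $T(n)\subseteq F(n)$ and $|F(n)|\geq|T(n)|$. From Lemma \ref{Fncontained},
\[
|F(n)| \leq |T(n)| + |A(n)| + |B(n)| + |C(n)|,
\]
so it suffices to show each of $|A(n)|,|B(n)|,|C(n)|$ is $o(|T(n)|)$.

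Write $g(n)=|F(n)|/|T(n)|$ and $\alpha=\log 3$. Iterating the growth bound yields $\log(|T(n)|/|T(n-k)|) \geq \tfrac{1+\alpha}{2}\bigl(kn-\tfrac{k(k+1)}{2}\bigr)$. Dividing the inclusion above by $|T(n)|$ and applying Lemmas \ref{An}--\ref{Cn} to each summand, replacing $|F(n-k)|$ by $g(n-k)|T(n-k)|$, I get a recursion
\[
g(n) \leq 1 + \eta_1(n)\,g(n-1) + \eta_2(n)\,g(n-\log n) + \eta_3(n)\,g(n-2),
\]
where $\log\eta_i(n)$ is the exponent in the corresponding lemma minus the $T$-growth exponent over $k_i\in\{1,\log n,2\}$ steps. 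The constants defining $A(n),B(n),C(n)$ and the numbers $\beta,\gamma$ were chosen so that $\log\eta_i(n)\to-\infty$: one has $\log\eta_1(n)=n(1-(1+\alpha)/2)+O(\log^2 n)$, negative since $\alpha>1$; $\log\eta_2(n)=n\log n\,(\beta-(1+\alpha)/2)+O(n)=-(\alpha-1)n\log n/10^6+O(n)$; and $\log\eta_3(n)=n(\gamma-(1+\alpha))+O(\log^2 n)$, which is negative by a short calculation using $\alpha=\log 3>3/2$. In each case the linear or log-linear savings dominate the $O(\log^2 n)$ slack in the lemmas.

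A two-step bootstrap now finishes the argument. Pick $n_0$ so that $\eta_1(n)+\eta_2(n)+\eta_3(n)\leq 1/2$ for $n\geq n_0$, and set $K=\max(2,\max_{m\leq n_0}g(m))$. Induction on $n$ yields $g(n)\leq K$ for all $n$, since the recursion gives $g(n)\leq 1+K/2\leq K$. Plugging this uniform bound back into the recursion: for any $\varepsilon>0$ and $n$ large enough, $\eta_1(n)+\eta_2(n)+\eta_3(n)\leq\varepsilon/K$, hence $g(n)\leq 1+\varepsilon$. Combined with $g(n)\geq 1$ this gives $g(n)\to 1$, i.e., $|F(n)|=|T(n)|(1+o(1))$. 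The main subtlety is verifying the three strict inequalities $1<(1+\alpha)/2$, $\beta<(1+\alpha)/2$, and $\gamma<1+\alpha$ with enough room to dominate the lower-order terms; once those are in place the bootstrap is routine.
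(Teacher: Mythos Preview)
Your argument is correct and follows essentially the same route as the paper: the same decomposition via Lemma~\ref{Fncontained}, the same three ratio bounds from Lemmas~\ref{An}--\ref{Cn} combined with the growth estimate $|T(n+1)|\ge 6^{n/2}|T(n)|$, and the same three sign checks $\tfrac{1-\alpha}{2}<0$, $\beta-\tfrac{1+\alpha}{2}=\tfrac{1-\alpha}{10^6}<0$, and $\gamma-(1+\alpha)<0$. The only cosmetic difference is that the paper runs a single induction proving the quantitative bound $|F(n)|\le(1+c\eta^{-n})|T(n)|$ with $\eta=2^{1/3000}$, whereas your two-step bootstrap yields only the qualitative $o(1)$ conclusion; both suffice for the theorem as stated.
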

\begin{proof}
	Set $\eta=2^{\frac{1}{3000}}$. We will prove that there exists a constant $c\geq1$ such that for all $n$, 
\begin{equation}
|F(n)|\leq(1+c\eta^{-n})|T(n)|\label{eqn1}
\end{equation} holds. Let $n_0$ be a natural number large enough for all our estimates from Lemmas \ref{An} to \ref{Cn} to hold, and choose a $c\geq 1$ such that $|F(n)|\leq(1+c\eta^{-n})|T(n)|$ for all $n\leq n_0$. We use this as a basis for induction on $n$. 

	Suppose that for all $n'<n$ equation \ref{eqn1} holds. From Lemma \ref{Fncontained}, we have \[|F(n)|\leq|T(n)|+|A(n)|+|B(n)|+|C(n)|\] If we show that the ratio $\frac{|X(n)|}{|T(n)|}$, where $X$ is any of $A,B,C$, is at most $\frac{c}{3}\eta^{-n}$, the result will follow. We will use Lemmas \ref{An} to \ref{Cn} and induction to prove these bounds.

\begin{enumerate}
	\item{
\[
		\begin{split}
			\frac{|A(n)|}{|T(n)|}&=\frac{|A(n)|}{|F(n-1)|}\frac{|F(n-1)|}{|T(n-1)|}\frac{|T(n-1)|}{|T(n)|}\leq\\
			&\leq2^{n+\log^2n+\log n-1}(1+c\eta^{-(n-1)})6^{-\frac{1}{2}(n-1)}\leq\\
			&\leq2c2^{n+\log^2n+\log n-1}2^{-\frac{\log6}{2}(n-1)}=\\
			&=c2^{n+\log^2n+\log n-\frac{1+\alpha}{2}(n-1)}=\\
			&=c2^{n(\frac{1-\alpha}{2})+\log^2n+\log n+\frac{\alpha+1}{2}}
		\end{split}
\]
The leading term in the exponent of 2 is $n(\frac{1-\alpha}{2})$. Notice that $1-\alpha<0$, so as $n_0$ is assumed to be a very large number,
\[c2^{n(\frac{1-\alpha}{2})+\log^2n+\log n+\frac{\alpha+1}{2}}\leq\frac{c}{3}\eta^{-n}\]}
	\item{
\[
		\begin{split}
			\frac{|B(n)|}{|T(n)|}&=\frac{|B(n)|}{|F(n-\log n)|}\frac{|F(n-\log n)|}{|T(n-\log n)|}\prod_{i=1}^{\log n}\frac{|T(n-i)|}{|T(n-i+1)|}\leq\\
	&\leq2^{\beta n\log n+n+\frac{3}{2}\log^2n-\frac{1}{2}\log n}(1+c\eta^{-(n-\log n)})\prod_{i=1}^{\log n}6^{-\frac{1}{2}(n-i)}\leq\\
	&=2^{\beta n\log n+n+\frac{3}{2}\log^2n-\frac{1}{2}\log n}(1+c\eta^{-(n-\log n)})6^{-\frac{1}{2}({\sum_{i=1}^{\log n}(n-i)})}\leq\\
	&\leq2^{\beta n\log n+n+\frac{3}{2}\log^2n-\frac{1}{2}\log n}(1+c\eta^{-(n-\log n)})6^{-\frac{1}{2}(\log n(n-\frac{1}{2}\log n+1))}\leq\\
	&\leq c2^{\beta n\log n+n+\frac{3}{2}\log^2n-\frac{1}{2}\log n+1}6^{-\frac{1}{2}(\log n(n-\frac{1}{2}\log n+1))}=\\
	&=c2^{\beta n\log n+n+\frac{3}{2}\log^2n-\frac{1}{2}\log n+1+(1+\alpha)(-\frac{1}{2}(\log n(n-\frac{1}{2}\log n+1)))}
		\end{split}
\]
For readability, we will continue our calculations on the exponent of 2 until we reach a more manageable expression:
\[
\begin{split}
\beta n\log n+n&+\frac{3}{2}\log^2n-\frac{1}{2}\log n+1+(1+\alpha)(-\frac{1}{2}(\log n(n-\frac{1}{2}\log n+1)))=\\
	&=\frac{3}{2}\log^2n-\frac{1}{2}\log n+n+\beta n\log n+1-\frac{1}{2}n\log n-\frac{1}{4}\log^2n-\\
&\hspace{1cm}-\frac{1}{2}\log n-\frac{\alpha}{2}n\log n+\frac{\alpha}{2}\log^2n-\frac{\alpha}{2}\log n=\\
&=(\frac{3+\alpha}{2}-\frac{1}{4})\log^2n+n\log n(\beta-\frac{1}{2}-\frac{\alpha}{2})-\\
&\hspace{1cm}-\log n(1+\frac{\alpha}{2})+n+1=\\
&=n\log n(\beta-\frac{1}{2}-\frac{\alpha}{2})+n+\frac{5+2\alpha}{4}\log^2n-\log n(\frac{2+\alpha}{2})+1=\\
&=\frac{1-\alpha}{10^6}n\log n+n+\frac{5+2\alpha}{4}\log^2n-\log n(\frac{2+\alpha}{2})+1
\end{split}
\]
Therefore,
\[
\frac{|B(n)|}{|T(n)|}\leq c2^{\frac{1-\alpha}{10^6}n\log n+n+\frac{5+2\alpha}{4}\log^2n-\log n(\frac{2+\alpha}{2})+1}
\]
The leading term in the exponent is $\frac{1-\alpha}{10^6}n\log n$, and $1-\alpha<0$. For sufficiently large $n$, $$c2^{\frac{1-\alpha}{10^6}n\log n+n+\frac{5+2\alpha}{4}\log^2n-\log n(\frac{2+\alpha}{2})+1}<\frac{c}{3}\eta^{-n}$$
}
	\item{
\[
		\begin{split}
			\frac{|C(n)|}{|T(n)|}&=\frac{|C(n)|}{|F(n-2)|}\frac{|F(n-2)|}{|T(n-2)|}\frac{|T(n-2)|}{|T(n-1)|}\frac{|T(n-1)|}{|T(n)|}\leq\\
			&\leq2^{\gamma n+2\log n+2\log^2n}(1+c\eta^{-(n-2)})6^{-\frac{1}{2}(n-2)}6^{-\frac{1}{2}(n-1)}\leq\\
			&\leq2^{\gamma n+2\log n+2\log^2n}2c6^{-\frac{1}{2}(n-2)}6^{-\frac{1}{2}(n-1)}=\\
			&=2^{\gamma n+2\log n+2\log^2n}2c6^{-\frac{1}{2}(2n-3)}=\\
			&=c2^{\gamma n+2\log n+2\log^2n+1-\frac{\log6}{2}(2n-3)}=\\
			&=c2^{(\gamma-\log6)n+2\log n+2\log^2n+1+\frac{3}{2}\log6}
		\end{split}
\]
Now, $\gamma-\log6=1+\frac{4}{10^6}+\frac{3}{100}+\alpha(1-\frac{2}{10^6}-\frac{2}{100})-(1+\alpha)=\frac{4}{10^6}+\frac{3}{100}-\frac{2\alpha}{10^6}-\frac{2\alpha}{100}<0$, so $\frac{|C(n)|}{|T(n)|}<\frac{c}{3}\eta^{-n}$. Therefore, \[\frac{|F(n)|}{|T(n)|}\leq1+c\eta^{-n}\]}
\end{enumerate}
and we conclude that the proportion of $I_3$-free digraphs on $n$ vertices which are not bitournaments becomes negligible as $n$ tends to infinity.
\end{proof}

\bibliographystyle{acm}
\bibliography{refs}
\end{document}